
\documentclass[12pt,a4paper]{amsart}
\usepackage{amsmath,amssymb, amsbsy}
\usepackage{color,psfrag}
\usepackage[dvips]{graphicx}
\usepackage{enumerate}
\textheight237mm \textwidth165mm \topmargin-10mm \hoffset-19mm

\usepackage{stackrel}
\usepackage[bookmarks=false]{hyperref}

\usepackage{mathtools}
\mathtoolsset{showonlyrefs}

\newcommand{\ieq}{\begin{equation}}
\newcommand{\eeq}{\end{equation}}
\newcommand{\ieqa}{\begin{eqnarray}}
\newcommand{\eeqa}{\end{eqnarray}}
\newcommand{\ieqas}{\begin{eqnarray*}}
\newcommand{\eeqas}{\end{eqnarray*}}


\theoremstyle{plain}
\newtheorem{theorem}{Theorem} [section]
\newtheorem{corollary}[theorem]{Corollary}

\def\neweq#1{\begin{equation}\label{#1}}
\def\endeq{\end{equation}}


\theoremstyle{definition}


\numberwithin{figure}{section}

\newcommand{\average}{{\mathchoice {\kern1ex\vcenter{\hrule
height.4pt width 6pt depth0pt} \kern-11pt} {\kern1ex\vcenter{\hrule height.4pt width 4.3pt depth0pt} \kern-7pt} {} {} }}

\graphicspath{{Images/}}

\begin{document}

\title[Fractions associated to Ford circles extracted by inclined lines]{Properties and approximations of fractions associated to Ford circles extracted by inclined lines}

\author[M. Elizalde]{Mauricio Elizalde}

\address{Mauricio Elizalde\\ Departamento de Matemáticas Fundamentales, Facultad de Ciencias, Universidad Nacional de Educaci\'on a Distancia\\
Madrid, C.P. 28040, Spain\\
melizalde@mat.uned.es}

\author[C. Guevara]{C\'esar Guevara}

\address{C\'esar Guevara \\ Departamento de Matemáticas, Facultad de Ciencias, Universidad Nacional Aut\'onoma de M\'exico \\
Coyoac\'an, C.P. 04510, Mexico City\\
guevaraces@ciencias.unam.mx }

\keywords{Ford circles, Farey sequences, Farey sum, prime omega function, Euler's totient function, M\"obius function, cardinality of fractions, lattice points, jumps.
\\ \indent 2010 {\it MSC: 11N37, 11P21, 11B57, 52C26, 51M15.}}

\begin{abstract}
We study fractions associated to Ford circles which are extracted by means continuous curves. We show that the extracted fractions have similar properties to Farey sequences, like the Farey sum, and we prove that every ordered sequence that satisfies the Farey sum and has two adjacent fractions, can be extracted from Ford circles through continuous curves. This allows us to relate sequences of fractions that satisfy the Farey sum and continuous curves.
We focus on the fractions $F_{1/m}$ extracted from inclined lines with positive slopes of the form $1/m$ and define \textit{jumps} as the cardinality increments of these fractions with respect to $m$. We relate the expression for every jump to the prime omega function in terms of $m$ and find a cardinality formula related to the M\"obius function, which we approximate with three tractable expressions that grow in a log-linear way considering estimations of sums related to Euler's totient function or the graph of the lattice points corresponding to the fractions $p/q$.
\end{abstract}

\maketitle


\section{Introduction}

Ford circles (Figure~\ref{Figure.All}) are defined in $\mathbb{R}^2$ as a family of circles with center in $\left(\dfrac{p}{q},\dfrac{1}{2q^2}\right)$, and radius ${1}/{2q^2}$, where $p$ and $q$ are relatively prime. We associate each circle to the fraction given by its tangent point $p/q$ in the x-axis, and say that two fractions are adjacent when their corresponding circles are tangent.
L. R. Ford introduced these circles in \cite{ford1938fractions} and showed several of their properties, such as that two fractions $\dfrac{p_1}{q_1}$ and $\dfrac{p_2}{q_2}$ are adjacent if and only if $\vert p_2q_1 - p_1q_2\vert = 1$. The author associated these circles to Farey fractions, continued fractions and defined spheres lying in the upper half-space, touching the complex plane, analogue to the definition of Ford circles.

\begin{figure}[h]
\begin{center}
\includegraphics[
trim = 2cm 2cm 2cm 2cm, clip,
width=12cm]{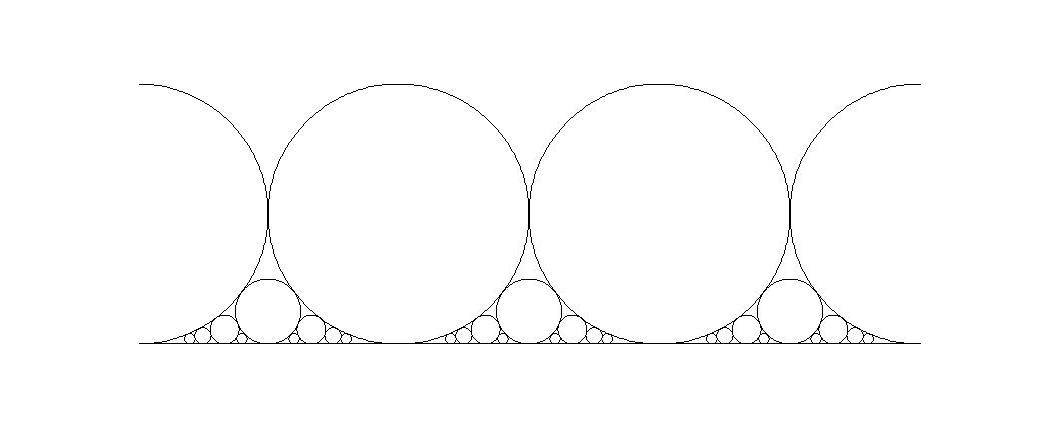}
\end{center}
	\caption{Ford circles.}
\label{Figure.All}
\end{figure}

In his work, Ford described the procedure to \textit{extract} fraction sequences from circles. Consider a continuous curve $L$ crossing the circles defined in the upper half-plane, specifically in $[0,\infty]\times[0,1]$. For every circle $C:\left(\dfrac{p}{q},\dfrac{1}{2q^2}\right)$ that $L$ crosses, we get the fraction $p/q$. If $L$ touches two tangent circles simultaneously, we agree on getting the fraction corresponding to the right circle. We also establish that we must not count a circle twice if the line touches it twice without touching another circle. 
For example, we can extract Farey sequences associated to Ford circles through horizontal lines in the interval $[0,1]$, \textit{e.g.}, if we set $L$: $y=k,\ k\in\mathbb{R^+}$, we get the Farey sequence of order $n$
\begin{equation}
\left\lbrace \ \frac{p}{q} \ : \ p,q\in\mathbb{Z}^+,\ (p,q)=1,\ q\geqslant p,\ q\leqslant n\right\rbrace,
\label{Farey}
\end{equation}
where $n = \Big\lfloor\sqrt{\frac{1}{k}}\Big\rfloor$.

Several authors have found relations between Ford circles and other mathematical objects such as Fibonacci numbers (\cite{kocik2020fibonacci}), the Mandelbrot set (for example \cite{devaney1999mandelbrot} and \cite{tou2017farey}), parabolas (\cite{hutchinson2016parabolas}) and even have
defined the Farey-Ford Polygon (\cite{athreya2015geometry}) and dealt with the geometry of Ford circles (\cite{athreya2014geometric} and \cite{simoson2021ford}). We also think of Ford circles as independent mathematical objects, and we extract fractions from them. 

A natural extension of the fraction extraction with horizontal lines is to consider inclined lines. In this work, we focus on the relation between Ford circles and fractions sequences we can \textit{extract} from inclined lines passing through the origin. At the end of Section \ref{inclined_lines}, we mention the case with lines that do not cross the origin. In that case, horizontal lines (and therefore Farey sequences) are a particular case when the slope tends to zero. We focus on circles in $[0,1] \times [0,1]$  (Figure~\ref{Figure.01}).

\begin{figure}[h]
\begin{center}
\includegraphics[
trim = 0cm 0.2cm 0cm 0cm, clip,
width=7cm]{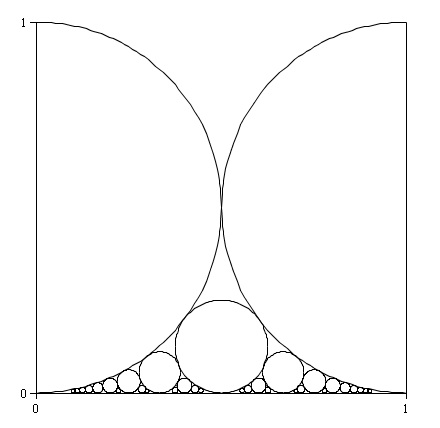}
\end{center}
	\caption{Ford circles in $[0,1] \times [0,1].$}
\label{Figure.01}
\end{figure}


\section{Inclined lines passing through the origin}
\label{inclined_lines}

Let's set the necessary conditions to know which circles are extracted from a line. Given a line $L$, to check if it touches a circle with radius $r$, we see if the minimal distance between the center of the circle and the line is less than or equal to $r$. In the last example for Farey sequences, L is the horizontal line $y=k$, and the condition is that $\left| k-r\right|\leqslant r$, i.e. $\left| k-\dfrac{1}{2q^2}\right|\leqslant\dfrac{1}{2q^2}$ or equivalently, $0\leqslant q^2 \leqslant \dfrac{1}{k}$. 

We now consider a family of lines passing through the origin with small slopes. It does not make sense to consider lines with big slopes since they will touch only one circle. So, we focus on inclined lines of the form $y=\dfrac{1}{m}x,m\in\mathbb{Z}^+$. Note that if $m$ tends to infinity, we have lines tending to the x-axis.\\

\noindent \textit{Claim.} 
Let $F_{1/m}$ be the sequence of fractions extracted from $y=\dfrac{1}{m}x,m\in\mathbb{Z}^+$. 
Then 
\begin{equation}
F_{1/m} = \left\lbrace \ \frac{p}{q} \ :\  p,q\in\mathbb{Z}^+,\ (p,q)=1,\ q\geqslant p,\ pq\leqslant m\right\rbrace.
\label{F1m}
\end{equation}

\begin{proof}
In this case, the circles touched by the line are those such that 
\begin{equation}
\frac{\left|\frac{1}{m}\frac{p}{q} - \frac{1}{2q^2}\right|}{\sqrt{\left(\frac{1}{m}\right)^2+1}}\leqslant r = \frac{1}{2q^2}.
\label{Condition}
\end{equation}
In consequence, 
$$
\dfrac{1}{2}m \left( 1 - \sqrt{\left(\dfrac{1}{m}\right)^2+1}\right) \leqslant pq \leqslant \dfrac{1}{2}m\left( 1 + \sqrt{\left(\dfrac{1}{m}\right)^2+1}\right).
$$ 
The left-hand side of this inequality is negative, and the right-hand side is a non-integer between $m$ and $m+1$, so the condition becomes $pq\leqslant m$.
\end{proof}

For example, for $m=32$ (Figure~\ref{Figure.Line32}), we have that the sequence is
$$F_{1/32} = \left\lbrace \ \frac{p}{q} \ : \  p,q\in\mathbb{Z}^+,\ (p,q)=1,\ q\geqslant p,\ pq\leqslant 32\right\rbrace,$$
%
\begin{figure}[h]
\begin{center}
\includegraphics[
trim = 0cm 1cm 0cm 0cm, clip,
width=12cm]{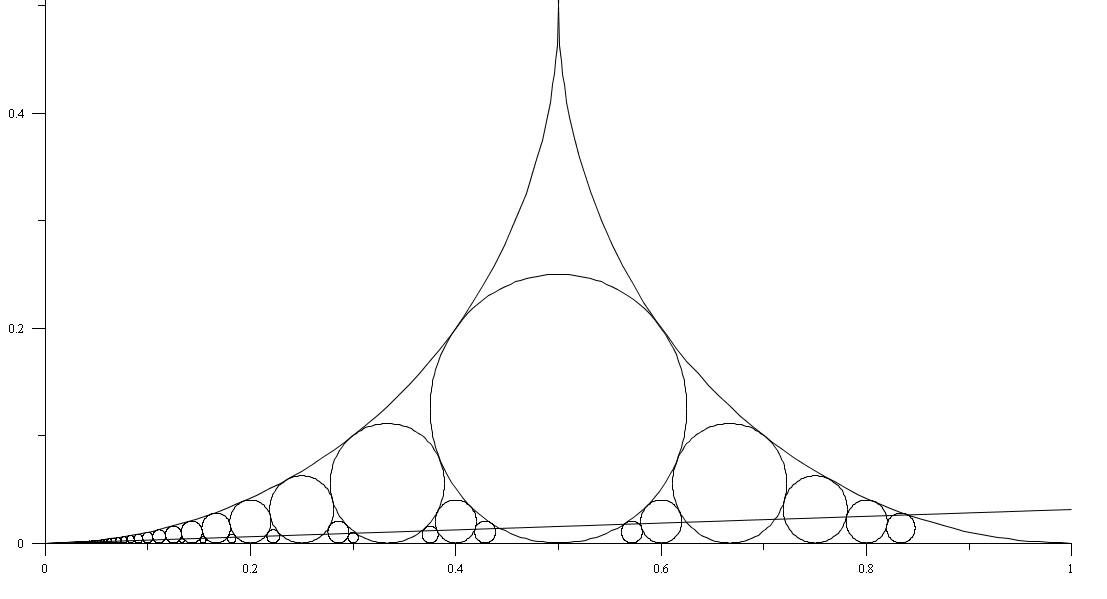}
\end{center}
	\caption{Circles touched by L for $m=32$.}
\label{Figure.Line32}
\end{figure}

and explicitly the sequence is
\begin{equation}
\begin{array}{rl}
\left\lbrace\dfrac{0}{1},\dfrac{1}{32},\dfrac{1}{31},\dfrac{1}{30},... ,\dfrac{1}{9},\dfrac{1}{8},\dfrac{2}{15},\dfrac{1}{7},\dfrac{2}{13},\dfrac{1}{6},\dfrac{2}{11},\dfrac{1}{5},\dfrac{2}{9}, \right. 
\\\\
\left.\dfrac{1}{4},\dfrac{2}{7},\dfrac{3}{10},\dfrac{1}{3},\dfrac{3}{8},\dfrac{2}{5},\dfrac{3}{7},\dfrac{1}{2},\dfrac{4}{7},\dfrac{3}{5},\dfrac{2}{3},\dfrac{3}{4},\dfrac{4}{5},\dfrac{5}{6},\dfrac{1}{1}\right\rbrace.
\end{array}
\label{example}
\end{equation}

To end this section we consider lines that do not pass through the origin, i.e., of the form $y=~\dfrac{1}{m}x+~b,$  $m\in\mathbb{Z}^+,\ b\in(0,1)$. The cardinality of the sequence generated tends to be finite as $m$ grows since the output sequence tends to be a Farey sequence.

Let's see the condition to touch circles from a line $L:$ $y=\dfrac{1}{m}x+b,\ m\in\mathbb{Z}^+,\ b\in(0,1)$. Given $b$, the line crosses a circle if 
\begin{equation}
\frac{\left|\frac{1}{m}\frac{p}{q} - \frac{1}{2q^2} + b\right|}{\sqrt{\left(\frac{1}{m}\right)^2+1}}\leqslant r=\frac{1}{2q^2}.
\label{Condition2}
\end{equation}
Condition \eqref{Condition2} is pretty similar to \eqref{Condition}. In this case, we find $pq+mq^2b\leqslant m$. Therefore, the sequence is 
$$ \left\lbrace \ \frac{p}{q} \ : \  p,q\in\mathbb{Z}^+,\ (p,q)=1,\ q\geqslant p,\ pq+mq^2b\leqslant m \right\rbrace.$$

Observe that as $m$ tends to $\infty$, the condition \eqref{Condition2} is equivalent to $q\leqslant \sqrt{\dfrac{1}{b}}$ and we get the Farey sequence
\begin{equation}
\left\lbrace \ \dfrac{p}{q} \ : \ p,q\in\mathbb{Z}^+,\ (p,q)=1,\ q\geqslant p,\ q\leqslant \sqrt{\frac{1}{b}} \right\rbrace,
\end{equation}
as in \eqref{Farey}.

We have described features of the sequences associated to the extracted circles from inclined lines. Later on, we discuss the cardinality of the sequences generated from lines passing the origin.

\section{Correspondence between Ford circles and the Farey sum}

In \eqref{example} we provided an explicit example of the sequence we extract from inclined lines with $y=x/32$. Observe that if we take three consecutive terms of the sequence $\ \dfrac{a_1}{b_1},\ \dfrac{a_2}{b_2},\ \dfrac{a_3}{b_3}\ $ of $F_{1/32}$, the Farey sum
$ \dfrac{a_2}{b_2} = \dfrac{a_3 + a_1}{b_1 + b_3}, $
holds. In general, this is true for every increasing (or decreasing) sequence of fractions associated to the circles touching a curve, as we show in the following theorem.

\begin{theorem}
Let $L$ be a curve that generates an increasing (or decreasing, without loss of generality) sequence of fractions associated to Ford circles. Then $\ \dfrac{a_2}{b_2} = \dfrac{a_3 + a_1}{b_1 + b_3}\ $ for every consecutive terms $\ \dfrac{a_1}{b_1},\ \dfrac{a_2}{b_2},\ \dfrac{a_3}{b_3}\ $ of the sequence.
\end{theorem}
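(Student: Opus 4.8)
The plan is to exploit the defining adjacency property of Ford circles established by Ford, namely that two fractions $\frac{p_1}{q_1}$ and $\frac{p_2}{q_2}$ are adjacent (their circles tangent) if and only if $\vert p_2 q_1 - p_1 q_2\vert = 1$. The key structural observation is that any continuous curve extracting an increasing sequence must pass from one circle to the next through a point of tangency; consecutive extracted fractions are therefore \emph{adjacent} in Ford's sense. I would first argue this carefully: if $\frac{a_1}{b_1}$ and $\frac{a_2}{b_2}$ are consecutive in the extracted sequence with no fraction strictly between them being picked up, the curve leaves the first circle and enters the second, and because the circles partition the relevant region with tangencies, the only way to move monotonically from one to an adjacent position without crossing an intervening circle is for the two circles to be mutually tangent. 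This gives the two relations $\vert a_2 b_1 - a_1 b_2\vert = 1$ and $\vert a_3 b_2 - a_2 b_3\vert = 1$ for three consecutive terms.

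Next, I would pin down the signs. Since the sequence is increasing, $\frac{a_1}{b_1} < \frac{a_2}{b_2} < \frac{a_3}{b_3}$, so the cross-differences have definite signs: $a_2 b_1 - a_1 b_2 > 0$ and $a_3 b_2 - a_2 b_3 > 0$. Combined with tangency this forces
\begin{equation}
a_2 b_1 - a_1 b_2 = 1, \qquad a_3 b_2 - a_2 b_3 = 1.
\end{equation}
Subtracting these two equations gives $a_2 b_1 - a_1 b_2 - a_3 b_2 + a_2 b_3 = 0$, which rearranges to $a_2 (b_1 + b_3) = b_2 (a_1 + a_3)$, i.e.
\begin{equation}
\frac{a_2}{b_2} = \frac{a_1 + a_3}{b_1 + b_3},
\end{equation}
as claimed. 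This algebraic step is completely routine once the two tangency relations with correct signs are in hand.

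The main obstacle is the first step: rigorously justifying that consecutive extracted fractions correspond to \emph{tangent} circles. This is where the geometry of Ford circles and the extraction conventions (count the right circle at a simultaneous tangency, do not double-count a circle touched twice) do the real work. I would need to rule out the possibility that the curve jumps from one circle to a non-adjacent one while skipping every circle in between, and here the crucial fact is that between any two tangent Ford circles there is always a smaller circle tangent to both (the mediant $\frac{a_1+a_3}{b_1+b_3}$), so the family of circles is arranged so that any monotone continuous passage from one circle to the ``next extracted'' one cannot avoid an intervening circle unless the two are themselves tangent. I would make this precise by using that a continuous curve crossing from the interior of one circle toward larger $x$-values must, by the nesting/tangency structure, pass through the tangency point shared with the immediately neighboring circle; monotonicity of the extracted fractions guarantees we are always moving to an adjacent neighbor rather than re-entering a previously counted circle. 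Once adjacency of consecutive terms is secured, the remainder is the short computation above.
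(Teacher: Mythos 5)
Your proposal is correct and follows essentially the same route as the paper: consecutive extracted fractions correspond to tangent circles, Ford's criterion with the monotonicity of the sequence gives $a_2b_1 - a_1b_2 = 1 = a_3b_2 - a_2b_3$, and subtracting yields the mediant identity. The only difference is that you attempt to justify the tangency-of-consecutive-circles step geometrically, which the paper simply asserts, so if anything your write-up is more careful at the one point the paper glosses over.
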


\begin{proof} 
Assume $L$ generates an increasing sequence $F$ of fractions. Let $\dfrac{a_1}{b_1}$ and $\dfrac{a_2}{b_2}$ be two consecutive fractions of $F$ such that $\dfrac{a_1}{b_1} < \dfrac{a_2}{b_2}$. Since $L$ generates the fractions consecutively, their respective circles are tangent, and the equality $a_2b_1 - a_1b_2 = 1$ holds.

We repeat the argument above with three consecutive fractions of $F$, $\dfrac{a_1}{b_1}, \dfrac{a_2}{b_2}$ and $\dfrac{a_3}{b_3}$ such that $\dfrac{a_1}{b_1} < \dfrac{a_2}{b_2} < \dfrac{a_3}{b_3}$, and find that 
$
a_2b_1 - a_1b_2 = 1 = a_3b_2 - a_2b_3.
$
Therefore, 
$$ \dfrac{a_2}{b_2} = \dfrac{a_3 + a_1}{b_1 + b_3}.
$$
\end{proof} 

Conversely, it is not true that if a sequence $F$ of irreducible fractions satisfies the Farey sum, then $F$ is generated by a curve crossing Ford circles. A counterexample is $\dfrac{9}{14} = \dfrac{4}{7} + \dfrac{5}{7}$: the ordered sequence of these fractions could not be generated by a curve crossing Ford circles, since a curve generates adjacent fractions and therefore tangent circles. The circles associated to these fractions are not tangent to one another. But with a condition on a consecutive pair of fractions we can state the following theorem.

\begin{theorem}
Let $F$ be an increasing (or decreasing) sequence of irreducible fractions such that

\begin{itemize}
\item $\ \dfrac{a_2}{b_2} = \dfrac{a_3 + a_1}{b_1 + b_3}\ $ for all consecutive terms $\ \dfrac{a_1}{b_1},\ \dfrac{a_2}{b_2},\ \dfrac{a_3}{b_3}\ $, and\\

\item At least two Ford circles corresponding to two consecutive fractions of $F$, are tangent.\\
\end{itemize}
Then, $F$ is generated by a curve $L$ crossing the Ford circles, and $L$ is unique when using lines crossing the tangent points and the centers.
\end{theorem}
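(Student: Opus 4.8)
\emph{(Proof proposal.)} The plan is to show that the two hypotheses together force \emph{every} consecutive pair of circles to be tangent, and then to build the curve explicitly from the resulting chain of tangent points. Assume without loss of generality that $F$ is increasing, with terms $\frac{a_1}{b_1}<\frac{a_2}{b_2}<\cdots$ and positive denominators. For consecutive terms set $d_i := a_{i+1}b_i - a_i b_{i+1}$. Monotonicity gives $\frac{a_{i+1}}{b_{i+1}}-\frac{a_i}{b_i}=\frac{d_i}{b_i b_{i+1}}>0$, so each $d_i$ is a positive integer, hence $d_i\geq 1$. First I would rewrite the Farey-sum hypothesis: cross-multiplying $\frac{a_{i+1}}{b_{i+1}}=\frac{a_{i+2}+a_i}{b_i+b_{i+2}}$ and rearranging yields exactly
\[
a_{i+1}b_i - a_i b_{i+1} \;=\; a_{i+2}b_{i+1} - a_{i+1}b_{i+2},
\]
that is, $d_i=d_{i+1}$. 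Thus all the determinants $d_i$ equal a single constant $d\geq 1$.

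Next I would invoke the second hypothesis. By Ford's criterion, two consecutive circles are tangent exactly when $|a_{i+1}b_i-a_ib_{i+1}|=1$, i.e. when $d_i=1$. Since at least one consecutive pair is tangent, some $d_{i_0}=1$, and by the constancy just proved $d_i=1$ for every $i$. Applying Ford's criterion in the other direction, \emph{every} consecutive pair of circles of $F$ is tangent. This propagation of a single local tangency to the whole sequence, via the Farey sum, is the conceptual heart of the argument and is the step I would emphasize.

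With global tangency in hand, I would construct $L$. Two externally tangent circles have their centers $C_i,C_{i+1}$ and their tangent point $T_i$ collinear, with $T_i$ lying on the segment $C_iC_{i+1}$; so the line through $C_i$ and $T_i$ also passes through $C_{i+1}$. I would take $L$ to be the polygonal curve $C_1C_2C_3\cdots$ obtained by joining consecutive centers by straight segments. This curve passes through the interior of each circle of $F$ (through its center), so it crosses each of them, and it runs through every tangent point $T_i$ in left-to-right order, so the fractions are produced in the order of $F$. Because Ford circles have pairwise disjoint interiors, no point of $L$ lying in an open disk $D_i$ can meet a circle outside $F$; the only boundary points of those disks met by $L$ are the tangent points $T_i$, and since each $T_i$ lies on no third Ford circle, the extraction convention for simultaneously touched tangent circles returns precisely the circles of $F$ and nothing more. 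Hence $L$ generates $F$.

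Finally, for uniqueness I would argue that if the pieces of the generating curve are required to be line segments passing through the tangent points and the centers, then each segment is forced: the segment between the $i$-th and $(i+1)$-st circles must contain $C_i$, $T_i$ and $C_{i+1}$, which are collinear, so it is exactly $C_iC_{i+1}$; concatenating these determined segments leaves no freedom, giving a unique $L$. The main obstacle I anticipate is the geometric bookkeeping in the construction step—verifying that $L$ meets no circle outside $F$, in particular that no third Ford circle passes through a tangent point $T_i$, and that the touching convention is applied correctly—whereas the determinant computation and the tangency propagation are short and robust.
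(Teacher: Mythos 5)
Your proposal is correct and follows essentially the same route as the paper: cross-multiplying the Farey sum to propagate the unit determinant $a_{i+1}b_i - a_i b_{i+1} = 1$ from the single tangent pair to every consecutive pair, then building $L$ from segments through the centers and tangent points, with uniqueness because those points are determined. Your writeup is somewhat more careful than the paper's (the constancy of the $d_i$ handles propagation in both directions, and you verify that $L$ extracts no circles outside $F$), but the underlying argument is the same.
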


\begin{proof} 
If a consecutive pair $\dfrac{a_1}{b_1},\dfrac{a_2}{b_2}$ of the fractions satisfy
$a_2b_1 - a_1b_2 = 1$, then this condition plus the Farey sum hypothesis lead to $ a_3b_2 - a_2b_3 = a_2b_1 - a_1b_2 = 1$ for the next fraction $a_3/b_3$ and for the following ones. In this case, we have a set of adjacent fractions, corresponding to tangent Ford circles.

We can construct a curve $L$ that crosses them by joining lines from the tangent points and their centers. Therefore $F$ is generated by a curve crossing the Ford circles. The construction is unique since the tangent points and centers are unique.
\end{proof}

\noindent\textit{Remarks}
\begin{enumerate}
\item The construction mentioned in theorem above is related to the construction of Farey-Ford polygons (see \cite{athreya2015geometry}, where the authors show their construction and related properties), but there could be more curves that touch the same set of circles. 
\item
The fact that at least two circles are tangent, gives us the possibility to find that all circles of those fractions are tangent. Observe that to have this condition, we can ask for the numerators and denominators of their corresponding fractions $\dfrac{a_1}{b_1},\dfrac{a_2}{b_2}$ to be prime relatives, \textit{i.e.,} $(a_1,a_2) =1$ and $ (b_1,b_2)=1$. In the previous counterexample this does not hold since $(7,14)=7$. 
\end{enumerate}

\section{Cardinality of the fractions $F_{1/m}$}

The cardinality of $F_{1/32}$ is 48. It is not immediate to compute cardinalities in general. So far it is clear that if $m$ tends to infinity, the cardinality of $F_{1/m}$ tends to infinity as well. We can see this from the set characterized in \eqref{F1m} and visualize it observing that the circles get ``more dense" when they are closer to the x-axis. We now discuss deeper the cardinality of the fractions extracted from inclined lines.

It is not so easy to find a pattern for the cardinality of the fractions $F_{1/m}$, but if we check the cardinality differences of the consecutive sequences  $F_{1/m}$ and $F_{1/m-1}$, it turns out that they are powers of 2, i.e. 
$$\left| F_{1/m}\right| - \left| F_{1/m-1}\right| = 2^x,$$
for some integer $x \geqslant 0$. We refer to these differences as \textit{jumps} in the cardinality and use the notation 
$S_m := \left| F_{1/m}\right| - \left| F_{1/m-1}\right|$, $m\geqslant$. Note that $S_m$ is the quantity of circles touched by the line with slope $\frac{1}{m}$ but not by the line with slope $\frac{1}{m-1}$. This leads to
\begin{equation}
\begin{array}{llll}
S_m &= \left| F_{1/m} \right| - \left| F_{1/m-1} \right| \\
& = \left| \left\lbrace  \frac{p}{q} : (p,q) = 1,\ q \geqslant p,\ pq \leqslant m \right\rbrace \right| \\
& \ \ \ - \left| \left\lbrace  \frac{p}{q} : (p,q) =1,\ q \geqslant p,\ pq \leqslant m - 1 \right\rbrace \right| 
\\
&= \left| \left\lbrace  \frac{p}{q} : (p,q) = 1,\ q \geqslant p,\ pq = m \right\rbrace \right| \\
&= \left| \left\lbrace (p,q) : \ pq = m, \ (p,q) = 1 \right\rbrace \right|.
\end{array}
\label{S_m_set}
\end{equation}

For counting the pairs $(p,q)$, we consider the last expression in \eqref{S_m_set} and the prime factors of $m$: if $m$ is a prime $p$, then $S_m = \left| \left\lbrace (1,p) \right\rbrace \right| = 1 = 2^0$; if $m$ is a product of two primes $p$ and $q$, $S_m = \left| \left\lbrace (1,pq),(p,q) \right\rbrace \right| = 2 = 2^1$; if $m$ is a product of three primes $p$, $q$ and $r$, $S_m = \left| \left\lbrace (1,pqr),(p,qr),(pq,r),(pr,q) \right\rbrace \right| = 4 = 2^2$; and so on. We observe that the exponent in $S_m$ is equal to the number of the prime factors of $m$ minus $1$. Indeed, it is enough to consider only distinct prime factors. We proof this fact in the following theorem.

\begin{theorem}
Let $F_{1/m}$ be the sequence of fractions extracted from $y=\frac{1}{m}x, \ m \geqslant$, and $S_m$ defined as above. Then
$S_m = 2^{\omega(m)-1}$, where the omega prime function $\omega(m)$ counts the number of distinct prime factors of $m$.
\end{theorem}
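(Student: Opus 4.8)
The plan is to compute $S_m = \lvert \{(p,q) : pq = m,\ (p,q)=1,\ q \geqslant p\}\rvert$, the last expression in \eqref{S_m_set}, by first counting \emph{ordered} coprime factorizations of $m$ and then halving to account for the constraint $q \geqslant p$. Throughout I assume $m \geqslant 2$, so that $\omega(m) \geqslant 1$.

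First I would set up a bijection between coprime factorizations and subsets of the set of prime divisors of $m$. Write the prime factorization $m = \pi_1^{a_1}\cdots \pi_k^{a_k}$, where $k = \omega(m)$. Suppose $m = pq$ with $(p,q)=1$. For each $i$, the prime $\pi_i$ divides $m$, hence divides $p$ or $q$; it cannot divide both, since $(p,q)=1$. The crucial point is that the \emph{entire} prime power $\pi_i^{a_i}$ must then belong to the same factor: the factor not divisible by $\pi_i$ contributes no power of $\pi_i$, so all $a_i$ copies of $\pi_i$ appearing in $m$ must come from the other factor. Thus a coprime factorization is determined by, and determines, a choice for each $i$ of whether $\pi_i^{a_i}$ divides $p$ or divides $q$. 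This yields exactly $2^k = 2^{\omega(m)}$ ordered pairs $(p,q)$ with $pq=m$ and $(p,q)=1$, and explains why only the distinct primes of $m$, not their multiplicities, affect the count.

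Next I would pass from ordered to unordered pairs. Since $(p,q)=1$ and $m \geqslant 2$, we can never have $p=q$: equality would force $\gcd(p,q)=p=q$, which equals $1$ only when $p=q=1$ and $m=1$. Hence the $2^{\omega(m)}$ ordered factorizations partition into pairs $\{(p,q),(q,p)\}$ with $p \neq q$, and precisely one member of each pair satisfies $q > p$. Therefore
\[
S_m = \tfrac{1}{2}\, 2^{\omega(m)} = 2^{\omega(m)-1},
\]
which also matches the cases computed above (for instance $S_m = 2^0 = 1$ when $m$ is prime).

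The main obstacle is the key step in the bijection: justifying that coprimality forces each full prime power $\pi_i^{a_i}$ into a single factor, rather than allowing the exponent to be split between $p$ and $q$. Once this ``indivisibility'' is established the counting is immediate, and it is precisely the content behind the earlier observation that it suffices to consider only the distinct prime factors of $m$.
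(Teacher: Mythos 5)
Your proof is correct, but it takes a genuinely different route from the paper. The paper establishes the count of ordered coprime pairs $(p,q)$ with $pq=m$ by identifying it with the number of squarefree divisors of $m$, i.e.\ $\sum_{d\mid m}\mu^2(d)$, and then invokes Bell series (citing Apostol) to evaluate this sum as $2^{\omega(m)}$, before halving. You instead prove the count $2^{\omega(m)}$ by a direct, self-contained bijection: coprimality forces each full prime power $\pi_i^{a_i}$ into exactly one of the two factors, so ordered coprime factorizations correspond to subsets of the distinct primes of $m$. Your argument is more elementary (no appeal to multiplicative-function machinery) and is actually more careful in two places where the paper is loose: the paper's displayed list of pairs $(p_1,p_2\cdots p_k)$, etc., implicitly treats $m$ as squarefree, whereas your prime-power ``indivisibility'' step handles general $m$; and the paper halves the count without remarking that no pair is fixed under swapping, while you explicitly verify $p\neq q$ for $m\geqslant 2$ (which is also where the implicit hypothesis $m\geqslant 2$ enters, since the formula fails at $m=1$). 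What the paper's approach buys in exchange is the link to the identity $\sum_{d\mid m}\mu^2(d)=2^{\omega(m)}$, which directly yields the second cardinality formula $\left|F_{1/m}\right|=\tfrac{1}{2}\sum_{j=2}^{m}\sum_{d\mid j}\mu^2(d)+2$ in Corollary~4.2; from your proof that formula would need a separate (short) derivation.
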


\begin{proof}
We consider the squared M\"obius function $\mu^2 (m)$, that equals zero if it exists a prime $p$ such that $p^2 \vert m$, and one otherwise, which means that $m = p_1\cdots p_r$ or $m=1$ . We take the expression $\displaystyle\sum_{d \vert m} \mu^2 (d)$ which sums one when $d$ is a divisor without prime powers greater than one in its factorization. This is equivalent to counting pairs of the form
$$
\begin{array}{l}
(1,p_1p_2p_3\cdots p_k), (p_1,p_2p_3\cdots p_k),(p_2,p_1p_3\cdots p_k), (p_1p_2,p_3\cdots p_k), \ldots ,
\\
(p_1p_2p_3\cdots p_k,1).
\end{array}
$$
Those are the pairs in the last set of \eqref{S_m_set} but here they are counted twice.

Using Bell series (see \cite{apostol1976} p.44), we have that $\displaystyle\sum_{d \vert n} \mu^2 (d) = 2^{\omega(m)}$. Finally we take half of this and we find that
$$S_m = 2^{\omega(m)-1}.$$
\end{proof}

\begin{corollary} 
For $F_{1/m}$  as defined in the above theorem, we have that
$$\left| F_{1/m}\right| = \displaystyle\sum^m_{j=2}2^{\omega(j)-1}+2,\ \  \textup{and} \ \ \left| F_{1/m}\right| = \dfrac{1}{2}\displaystyle\sum^m_{j=2}\sum_{d\vert j}\mu^2(d)+2.$$
\label{Coro}
\end{corollary}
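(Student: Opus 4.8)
The plan is to prove both identities by telescoping the jumps $S_j$ and then handling the two boundary fractions separately. First I would express the cardinality as an accumulation of jumps. Since $S_j = |F_{1/j}| - |F_{1/(j-1)}|$ for every $j$, summing over $j$ from $2$ to $m$ collapses telescopically:
$$\sum_{j=2}^m S_j = |F_{1/m}| - |F_{1/1}|, \qquad \text{hence} \qquad |F_{1/m}| = |F_{1/1}| + \sum_{j=2}^m S_j.$$

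Next I would pin down the base value $|F_{1/1}|$. The line $y=x$ passes through the origin, so it is tangent to the circle at $0/1$; and among the fractions $p/q$ with $p,q\in\mathbb{Z}^+$, $(p,q)=1$, $q\geqslant p$, the constraint $pq\leqslant 1$ from \eqref{F1m} singles out only $(p,q)=(1,1)$, i.e. the fraction $1/1$. Thus $F_{1/1}=\{0/1,\,1/1\}$ and $|F_{1/1}|=2$, which is exactly the additive constant in the statement. Equivalently, the fraction $0/1$ together with the term $j=1$ (the fraction $1/1$) is not produced by the counting rule $2^{\omega(j)-1}$: because $\omega(1)=0$ one has $2^{\omega(1)-1}=\tfrac12\neq S_1$, so the summation index must start at $j=2$ and the two endpoint fractions are supplied by hand through the $+2$.

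Substituting the theorem's value $S_j=2^{\omega(j)-1}$ into the telescoped expression yields immediately the first identity $|F_{1/m}|=\sum_{j=2}^m 2^{\omega(j)-1}+2$. For the second identity I would reuse the intermediate equality proved inside the theorem, namely the Bell-series identity $\sum_{d\mid j}\mu^2(d)=2^{\omega(j)}$, which gives $S_j=\tfrac12\sum_{d\mid j}\mu^2(d)$; inserting this in place of $S_j$ produces $|F_{1/m}|=\tfrac12\sum_{j=2}^m\sum_{d\mid j}\mu^2(d)+2$.

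The computation is routine once the telescoping is in place; the only delicate point—and the step I expect to be the main obstacle—is the correct bookkeeping of the two boundary fractions $0/1$ and $1/1$, that is, justifying both the lower summation limit $j=2$ and the constant $2$. Since the formula $S_j=2^{\omega(j)-1}$ is valid only for $j\geqslant 2$, one must check carefully that $0/1$ is always extracted (the line through the origin meets its circle) and that $1/1$ correctly absorbs the omitted $j=1$ contribution, so that neither endpoint is double-counted nor dropped.
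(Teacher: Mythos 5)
Your proof is correct and takes essentially the same route as the paper: telescope the jumps $S_j$, add $2$ for the boundary fractions $0/1$ and $1/1$, then apply the theorem $S_j = 2^{\omega(j)-1}$ for the first identity and the Bell-series identity $\sum_{d\mid j}\mu^2(d)=2^{\omega(j)}$ for the second. Your bookkeeping is in fact more careful than the paper's one-line argument, which starts its sum at $j=1$ while the statement starts at $j=2$; your explicit observation that $2^{\omega(1)-1}=\tfrac12\neq S_1$ and that $|F_{1/1}|=2$ cleanly resolves that discrepancy.
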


\begin{proof}
From the definition of $S_m$, we write $\left| F_{1/m}\right| = \displaystyle\sum^m_{j=1} S_m +2 \ $ and apply the previous theorem. We have to sum $2$ for the fractions $0/1$ and $1/1$.
\end{proof}

The expressions in Corollary~\ref{Coro} are not easy to handle since they involve the finding of distinct prime factors. In the next section, we approximate the cardinality of $F_{1/m}$ using more closed-form expressions.

\section{Approximations for the cardinality}

In this section, we approximate the cardinality of the fractions $F_{1/m}$ using expression \eqref{F1m}, in three ways. In the first one, for every $p$ in $\{ 1,2,...,s \}$, where $s$ is the maximum possible value of $p$ in $F_{1/m}$, we count the values of $q$ such that $p/q$ is in $F_{1/m}$. In the second one, we bound the lattice area where the possible points $(p,q)$ of $F_{1/m}$ are, and then we approximate the number of points that are relative primes in that region. In the third one, for every $q$ in $\{ 1,2,...,m \}$, we count the values of $p$ in $F_{1/m}$. Observe that the maximum value of $q$ is $m$, that corresponds to the fraction $1/m$.

As we mentioned, we use $s$ as the maximum possible value of $p$. Then, $s$ is the integer that satisfies the inequality 
$$
s(s+1) \leqslant m < (s+1)(s+2).
$$ 
In addition, we have that
$
s = \max \left\lbrace r\in \mathbb{N} : r(r+1) \leqslant m \right\rbrace $ $= \max \left\lbrace r\in \mathbb{N} : r^2+r-m \leqslant 0 \right\rbrace 
= \lfloor - \frac{1}{2} + \frac{1}{2} \sqrt{1+4m} \rfloor.
$
From this, we find 
\begin{equation}
s \ll \sqrt{m} \ll s, \  \ 
O\left(\sqrt{m}\right) = O(s).
\label{m vs s}
\end{equation}
This quantity is important since it gives us the order of magnitude of the errors our approximations have.\\

\subsection{First approximation}

For the first approximation, we fix $p \in \left\lbrace 1,2,...,s \right\rbrace$, and let $C_p$ be the cardinality of the set 
\begin{equation}
\left\lbrace q:(p,q)=1,q\geqslant p,pq\leqslant m \right\rbrace.
\label{F(p)}
\end{equation}  
Then, adding the fractions $0/1$ and $1/1$, we have $\left| F_{1/m} \right| = 2 + \sum^s_{p=1} C_p$.

To compute $C_p$, we take into account the quantity of relative primes to $p$ in the range $\left\lbrace 1,...,x \right\rbrace$, that is $x\dfrac{\varphi(p)}{p}+O(p)$ (see the Appendix in section \ref{Appendix}). We remove from this count the first $\varphi(p)$ numbers such that $q < p$, and we find that
\begin{equation}
\left| F_{1/m} \right| = 2 + \sum^s_{p=1}\left[ m\dfrac{\varphi(p)}{p^2} - \varphi(p)+ O(\varphi(p)) \right].
\label{app1a}
\end{equation} 

Equation \eqref{app1a} is an attempt to have the first approximation. It depends on adding terms involving the Euler's totient function. Then, we use the following identities to build a more direct approximation.
\begin{equation}
\sum^s_{p=1} \dfrac{\varphi(p)}{p^2} = \dfrac{1}{\zeta(2)} (\ln s + C) - A + O\left( \dfrac{\ln s}{s} \right),
\label{eqwithAC}
\end{equation} 
where $C$ is the Euler constant and $A = \displaystyle\sum^\infty_{n=1} \dfrac{\mu(n)\ln(n)}{n^2}$, and
\begin{equation}
\sum^s_{p=1} \varphi(p) = \dfrac{1}{2}\dfrac{s^2}{\zeta(2)} + O(s\ln s).
\label{sumphi}
\end{equation} 

We substitute \eqref{eqwithAC} and \eqref{sumphi} in \eqref{app1a}, and use \eqref{m vs s} to find
$$
\begin{array}{ll}
\left| F_{1/m} \right| &= 2 + m \left[ \dfrac{1}{\zeta(2)} (\ln s+C) - A + O \left( \dfrac{\ln s}{s} \right) \right] 
\\
& \ \ - \left[ \dfrac{1}{2} \dfrac{s^2}{\zeta(2)} + O(s \ln s) \right] + O \left( \displaystyle\sum_{p=1}^s \varphi(p) \right)
\\
&= \dfrac{6m}{\pi^2} \left( \dfrac{s^2}{2m} + \ln s +C \right) - mA + 2  + O (m),
\end{array}
$$
where we use the fact that $\displaystyle\sum^\infty_{k=1}\dfrac{1}{k^2} = \zeta(2)$. Therefore,
\begin{equation}
\left| F_{1/m} \right| = \dfrac{m}{2 \zeta (2)} 
 \left( \dfrac{s^2}{2m} + \ln s +C \right) - mA+2+ O (m).
\label{preapp1}
\end{equation}
We use \eqref{m vs s} to delete $s$ from the expression and we have the first approximation:

\begin{equation}
\left| F_{1/m} \right| = \dfrac{m}{2 \zeta (2)} 
 \left( \ln m + 1 + 2C - 2A \zeta(2) \right) + 2 + O(m),
\label{app1}
\end{equation}
where $C$ and $A$ are given in \eqref{eqwithAC}.

Estimation \eqref{app1} does not depend on a sum and it is more tractable to compute than the expression in Corollary \ref{Coro}.\\

\subsection{Second approximation}

This approximation is inspired on the graph of the lattice points corresponding to the fractions $p/q$, where we represent the variable $p$ in the x-axis and $q$ in the y-axis (see Figure~\ref{Fig.Lattice}). Note that, from the definition, the points $(p,q)\in \mathbb{Z} \times \mathbb{Z}$ representing the fractions $p/q \in F_{1/m}$, are bounded by the lines $p=1$, $q=p$ and the curve $q = m/p$.

\begin{figure}[h]
\includegraphics[scale=.7]{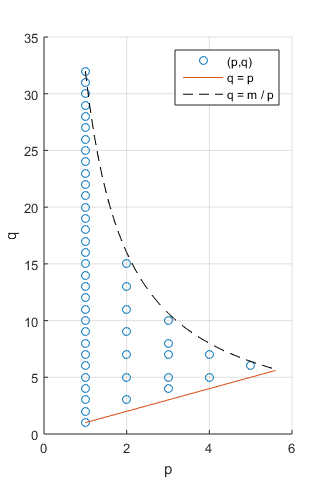}
\centering
\caption{Lattice points $(p,q)$ for the fractions $p/q \in F_{1/32}$. The red line represents the line $q=p$, and the dashed black line represents the curve $q = 32/p$.}
\label{Fig.Lattice}
\end{figure}

We say that two lattice points are \textit{mutually visible} if the segment joining them has no more lattice points. Note that the points $(x,y)$ mutually visible with the origin are those such that $(x,y)=1$, because if there is a point $(x',y')$ in the segment joining $(x,y)$ and $(0,0)$, then $(x,y) = \lambda (x',y'), \ \lambda > 1$. We refer to those points just as \textit{visible points}.

We simplify the approximation problem to counting the visible points in the bounded region described above. For that end, we first consider the squared region $p,q\leqslant r, r \in \mathbb{N}$. Apostol (\cite{apostol1976} p.63) shows that the proportion of the lattice points with respect to the total points tends to $6/\pi^2 \approx .6079$ when $r$ tends to infinity. Therefore, if we take two random integers, the probability they are relative primes is $6/\pi^2$. With this ratio, we can count the total amount of lattice points in the bounded region and multiply it by $6/\pi^2$ to get an approximation.

We consider the number of points below the curve $p\leqslant q/m$ by means of the expression 
$$ \displaystyle\sum^s_{k=1}\dfrac{m}{k} = m \sum^s_{k=1}\dfrac{1}{k}=m H_s,
$$ 
where $H_s$ is the $s$-th harmonic number; and the number of points below the line $q=p$ using the $s$-th triangular number $T_s = \dfrac{s(s+1)}{2}$. Taking into account that $H_s = \ln (s) + C + O\left(\dfrac{1}{s}\right) = \ln (s) + C + O\left(\dfrac{1}{\sqrt{m}}\right)$, the total amount of lattice points in the bounded region is $= m\cdot \ln(s) + mC-T_s + 1 + O\left(\sqrt{m}\right)$, where we added 1 because we subtracted the point $(1,1)$ when we subtracted the ones below $q=p$. Therefore, the amount of visible points in the obtained region is
\begin{equation}
\left| F_{1/m} \right| = 1 + \dfrac{6}{\pi^2}\left( m\cdot \ln(s)+mC-T_s + 1 \right) + O\left(\sqrt{m}\right).
\label{preapp2}
\end{equation}
We use \eqref{m vs s} to delete $s$ from this expression and we have the second approximation:
\begin{equation}
\left| F_{1/m} \right| = 
\dfrac{m}{2 \zeta (2)} (\ln m + 2C - 1) + 1 + \frac{1}{\zeta (2)}
 + O\left(\sqrt{m}\right),
\label{app2}
\end{equation}
where $C$ is the Euler constant.

\subsection{Third approximation}

For our last approximation,  we fix $q$ and consider the values of $p$ that are in the set
\begin{equation}
\left\lbrace p: (p,q) = 1 ,\ p\leqslant q ,\ p \leqslant m/q  \right\rbrace,
\end{equation}
which allows us to estimate the cardinality of $F_{1/m}$ through the sums

$$
\begin{array}{lll}
\left| F_{1/m}\right| &= \displaystyle\sum_{q \leqslant \sqrt{m}}
\left| \left\lbrace p : (p,q) = 1\ ,\ p \leqslant q \right\rbrace \right|
\\
& \ \ + \displaystyle\sum_{\sqrt{m} < q \leqslant m}
\left| \left\lbrace p : (p,q) = 1\ ,\ p \leqslant m/q \right\rbrace \right|
\\
&= \displaystyle\sum_{q \leqslant \sqrt{m}} \varphi (q)\ + 
\sum_{\sqrt{m} < q \leqslant m} \left( m \dfrac{\varphi(q)}{q^2} + O\left(\varphi(q)\right) \right),
\end{array}
$$
where we use the fact that the quantity of relative primes to $q$ in the range $\{1,...,m/q\}$ is (see the Appendix in section \ref{Appendix}) $\dfrac{m}{q}\ \dfrac{\varphi (q)}{q} + O(q)$.

Using equation \eqref{eqwithAC}, we have that
\begin{equation}
\begin{array}{ll}
\displaystyle\sum_{\sqrt{m} < q \leqslant m} \dfrac{\varphi (q)}{q^2} &= \displaystyle\sum_{q \leqslant m} \dfrac{\varphi (q)}{q^2} - \sum_{q \leqslant \sqrt{m}} \dfrac{\varphi (q)}{q^2}\\
& = \dfrac{1}{2\zeta (2)} \ln m + O \left( \dfrac{\ln m}{m} \right).
\end{array}
\label{eq 13}
\end{equation}
Now we use equations \eqref{sumphi} and \eqref{eq 13} to find that
$$
\begin{array}{ll}
\left| F_{1/m} \right| &= \displaystyle\sum_{q \leqslant \sqrt{m}}\varphi (q)\ + \sum_{\sqrt{m}<q \leqslant m} \left( m \dfrac{\varphi (q)}{q^2} + O(q) \right)
\\\\
&= \dfrac{1}{2}\ \dfrac{m}{2\zeta (2)} + O \left( \sqrt{m} \ln \sqrt{m} \right) + m \dfrac{1}{2\zeta (2)} \ln m + O \left( \dfrac{\ln m}{m} \right) + O(m^2).
\end{array}
$$ 

Therefore, our last approximation is given by
\begin{equation}
\left| F_{1/m} \right| = \dfrac{m}{2\zeta (2)} (\ln m + 1) + O(m^2).
\label{app3}
\end{equation}

Despite that this approximation and the first one have a bigger order of magnitude than the second one, in the following section we see that all of them have a common order of magnitude.

\subsection{Comments about the approximations}

We have developed three approximations to the expression given in Corollary \ref{Coro}, which involves the omega prime function, from three different perspectives. We summarize our findings in Table~\ref{Tab.Table1}. We see that these approximations are expressed in closed-form expression, and an interesting fact is that they have the common term
\begin{equation}
\frac{m}{2 \zeta (2)} (\ln m + \alpha ) + \beta,
\end{equation}
where $\alpha$ and  $\beta$ are constants.

\begin{table}[h]
\centering
\caption{Approximations summary.}
\begin{tabular}{|c|l|}
\hline 
Cardinality & $ \left|F_{1/m}\right| = \sum^m_{j=1}2^{\omega(j)-1} +2$ \\ \hline 
Approx. 1 & $ a_1(m) = \frac{m}{2 \zeta (2)} (\ln m + 1 + 2C - 2A \zeta(2)) + 2$ \\
Approx. 2 & $a_2(m) = \frac{m}{2 \zeta (2)} (\ln m + 2C - 1) + 1 + \frac{1}{\zeta (2)}$ \\
Approx. 3 & $a_3(m) = \frac{m}{2 \zeta (2)} (\ln m + 1 )$ \\ \hline 
\end{tabular}
\label{Tab.Table1}
\end{table}


Thus, $\left|F_{1/m}\right|$ grows as $m \ln m$ does, meaning that the cardinality has a log-linear growth. We see empirically from Figure~\ref{Fig.Approx} that the third approximation is the best one, that $\alpha$ must be between $1$ and $1+2C- 2A \zeta(2) \approx 3.2944$ and that $\beta$ must be between $0$ and $2$ to get a good approximation.

\begin{figure}[h]
\begin{center}
\includegraphics[scale=.5]{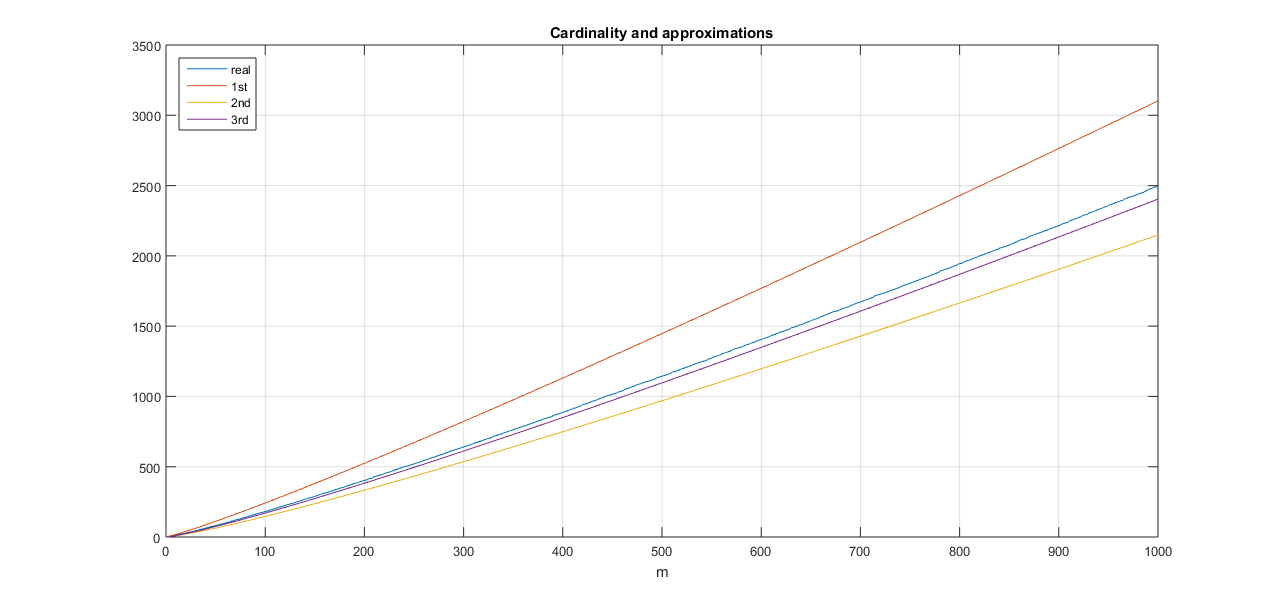}
\centering
\caption{Plot of the real cardinality and the three approximations we developed for $\left| F_{1/m} \right| $.}
\label{Fig.Approx}
\end{center}
\end{figure}

Naming the approximations as in Table \ref{Tab.Table1}, we have that $a_1(m)\ll a_2(m)\ll a_3(m)\ll a_1(m)$, then $\left|\left| F_{1/m}\right| - a_1(m)\right| \ll \left|\left| F_{1/m}\right| - a_2(m)\right| \ll O \left( \sqrt{m} \right)$ and $\left|\left| F_{1/m}\right| - a_3(m)\right| \ll \left|\left| F_{1/m}\right| - a_2(m)\right| \ll O \left( \sqrt{m} \right)$. Therefore, we conclude that the three approximations have the same order of magnitude, which is bounded by $\sqrt{m}$.

\section{Final Remarks}

There are several relations between Ford circles and other mathematical objects. In our case, we relate them to the fractions we can extract by touching them through inclined lines. If the line does not pass through the origin, the result of the extraction is a finite sequence, and when the line is horizontal we achieve the particular case of a Farey sequence. If the line is of the form $y=x/m$, our results show that when we incline ``more'' the slope of the line $1/(m-1)$ to $1/m$, there are new circles touched. The amount of new circles depends on $m$ and its number of distinct prime factors. If the line is horizontal we touch an infinite number of circles. With this information, we have an exact formula for the cardinality of these fractions, but it is in terms of the M\"obius and omega prime functions, and since these functions depend on prime factorization, the formulas do not have closed-form solutions. From this, we develop three ways to estimate the value of the cardinality of the sequences, where we face how to approximate functions involving prime factorization. We find that all out approximations grow in a log-linear way.


\section{Appendix}
\label{Appendix}

This appendix is devoted to approximating the number of relative primes to a fixed positive integer $n$ in the range $\left\lbrace 1,...,x \right\rbrace$, which we denote by $\varphi_x$.

\begin{theorem} Given $n$, $\varphi_x = x\dfrac{\varphi(n)}{n} + O(1)$.  
\end{theorem}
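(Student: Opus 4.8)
The plan is to count the integers coprime to $n$ by Möbius inversion. First I would write
\[
\varphi_x = \sum_{k \leqslant x} [\gcd(k,n)=1] = \sum_{k \leqslant x} \sum_{d \mid \gcd(k,n)} \mu(d),
\]
using the standard identity $\sum_{d \mid j} \mu(d) = [j=1]$ applied with $j = \gcd(k,n)$. Since the condition $d \mid \gcd(k,n)$ is equivalent to $d \mid n$ together with $d \mid k$, swapping the order of summation yields
\[
\varphi_x = \sum_{d \mid n} \mu(d) \sum_{\substack{k \leqslant x \\ d \mid k}} 1 = \sum_{d \mid n} \mu(d) \left\lfloor \frac{x}{d} \right\rfloor.
\]

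Next I would replace each floor by its real value with bounded error, $\lfloor x/d \rfloor = x/d + O(1)$, so that
\[
\varphi_x = x \sum_{d \mid n} \frac{\mu(d)}{d} + O\!\left( \sum_{d \mid n} |\mu(d)| \right).
\]
The error sum equals $2^{\omega(n)}$, a constant depending only on the fixed modulus $n$, hence it is $O(1)$. It then remains only to identify the main coefficient: from $\varphi = \mu * \mathrm{id}$ (equivalently, expanding the Euler product) one has $\sum_{d \mid n} \mu(d)/d = \prod_{p \mid n}(1 - 1/p) = \varphi(n)/n$, which gives $\varphi_x = x\,\varphi(n)/n + O(1)$, as claimed.

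The computation is routine; the only point requiring care is the nature of the $O(1)$ term. The implied constant is $2^{\omega(n)}$, which depends on $n$ and is harmless here precisely because $n$ is held fixed, but it is \emph{not} uniform in $n$. I would flag this explicitly so the estimate is not misapplied in a regime where the modulus grows with the range, as happens elsewhere in the main text where the roles of the fixed integer and the counting range are interchanged and the accumulated error must be tracked accordingly.
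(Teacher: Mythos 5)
Your proof is correct, but it is a genuinely different argument from the one in the paper. The paper proves the lemma by an elementary sandwich: it brackets $x$ between consecutive multiples $(r-1)n < x < rn$, notes that each complete residue system of length $n$ contains exactly $\varphi(n)$ integers coprime to $n$, so $\varphi_{(r-1)n} \leqslant \varphi_x \leqslant \varphi_{rn}$, and concludes that the error is at most $\varphi(n)$, hence $O(1)$ for fixed $n$. You instead expand the coprimality indicator by M\"obius inversion, obtain $\varphi_x = \sum_{d \mid n} \mu(d) \lfloor x/d \rfloor$, and bound the error by the number of squarefree divisors, $2^{\omega(n)}$. Both routes are valid, but the bounds they produce are quite different in their dependence on $n$: the paper's error constant is $\varphi(n)$, which is typically of order $n$, whereas yours is $2^{\omega(n)} = O(n^{\varepsilon})$, which is far smaller. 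This distinction is not academic for this paper: in the main text the lemma is applied with the ``fixed'' modulus ranging over $p \leqslant s$ (respectively $q \leqslant m$), and the paper's error terms $O(\varphi(p))$ accumulate to $O(m)$ in the first approximation, which is what limits its accuracy. With your bound, the accumulated error would be $\sum_{p \leqslant s} 2^{\omega(p)} = O(s \ln s) = O(\sqrt{m}\,\ln m)$, a strict improvement. Your closing remark about non-uniformity in $n$ is exactly the right caveat, and it is in fact the caveat the paper itself glosses over when it interchanges the roles of the fixed integer and the counting range.
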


\begin{proof} Observe that if we take a number $x$ between 1 and $n$, the probability that $x$ is relatively prime to $n$ is $\dfrac{\varphi(n)}{n}$. This observation works for $x>n$ taking single residue systems $R$ such that $x \in R$, and is the idea for the proof. 

First, assume that $x$ is a multiple of $n$, then $x=nk$ for some integer $k$. In the range $\{ 1,...,x \}$ there are $k$ consecutive residue systems of $n$, and in each one there are $\varphi (n)$ relative primes to $n$. Therefore, in the whole range there are $\varphi_x  = k\varphi (n) = \dfrac{x}{n}\varphi (n)$. This case is an illustrative example without any error.

Now assume that $x$ is not a multiple of $n$. Let $r$ be the integer such that $(r-1)n<x<rn$. We multiply this inequality by the constant $\dfrac{\varphi(n)}{n}$ to obtain 
\begin{equation} 
(r-1)n\dfrac{\varphi(n)}{n}<x\dfrac{\varphi(n)}{n}<rn\dfrac{\varphi(n)}{n}.
\label{fixapp}
\end{equation}
The bounds of $x \dfrac{\varphi(n)}{n}$ in \eqref{fixapp} represent the number of relative primes to $n$ in the ranges $\{ 1,...,(r-1)n \}$  and $\{ 1,...,rn \}$, respectively. Therefore
\begin{equation}
(r-1)n\dfrac{\varphi(n)}{n} = \varphi_{(r-1)n} \leqslant \varphi_x \leqslant \varphi_{rn} = rn\dfrac{\varphi(n)}{n}.
\label{fix}
\end{equation}

Considering \eqref{fixapp} and \eqref{fix}, we have that
$$\left| \varphi_x - x\dfrac{\varphi(n)}{n} \right| < \left| (r-1)n\dfrac{\varphi(n)}{n} - rn\dfrac{\varphi(n)}{n}\right| = n\dfrac{\varphi(n)}{n} (r-r+1) = \varphi(n),$$ 
and finally 
$\varphi_x = x\dfrac{\varphi(n)}{n} + O(1)$.
\end{proof}


\bibliographystyle{abbrv}
\bibliography{Ford_bib}

\end{document}